\documentclass[preprint,sort&compress,times,3p]{elsarticle}
\usepackage[latin1]{inputenc}

\usepackage{amsmath,amsthm,amssymb,amsfonts,mathtools}
\usepackage{array}

\usepackage{booktabs}


\usepackage{xcolor}



\setlength{\headheight}{14.5pt}

\usepackage[caption=false]{subfig}

\usepackage{fancyhdr}
\newlength\tabwidth
\makeatletter
\newcommand{\dashrule}[1][black]{%
  \color{#1}\rule[\dimexpr.5ex-.2pt]{4pt}{.4pt}\xleaders\hbox{\rule{4pt}{0pt}\rule[\dimexpr.5ex-.2pt]{4pt}{.4pt}}\hfill\kern0pt%
}
\newcommand{\rulecolor}[1]{%
  \def\CT@arc@{\color{#1}}%
}
\makeatother
%



\usepackage{hyperref}

\newcolumntype{L}{>{$\displaystyle}l<{$}}


\newcommand{\beq}{\begin{equation}}
\newcommand{\eeq}{\end{equation}}









 \newcommand{\R}{\mathbb{R}}

 \newcommand{\eps}{\varepsilon}

 \newcommand{\xvec}{\mathbf{r}}

\newcommand{\cO}{\ensuremath{\mathcal{O}}}

\DeclareMathOperator\bch{bch}

\newcommand{\Frac}[2]%
     {\frac{\displaystyle #1}{\displaystyle #2}}





\usepackage{natbib}
\overfullrule=0mm

\newtheorem{theorem}{Theorem}[section]
\newtheorem{lemma}[theorem]{Lemma}


\def\ii{i}
\def\beqs{\begin{equation*}}
\def\eeqs{\end{equation*}}

\begin{document}

\title{Fourier-Splitting methods for the dynamics of rotating Bose-Einstein condensates}
\author{Philipp Bader}
\ead{p.bader@latrobe.edu.au}
\address{Department of Mathematics and Statistics, La Trobe University, Bundoora 3068 VIC, Australia}

\begin{abstract}
	We present a new method to propagate rotating Bose-Einstein condensates subject to explicitly time-dependent trapping potentials. Using algebraic techniques, we combine Magnus expansions and splitting methods to yield any order methods for the multivariate and nonautonomous quadratic part of the Hamiltonian that can be computed using only Fourier transforms at the cost of solving a small system of polynomial equations. 
The resulting scheme solves the challenging component of the (nonlinear) Hamiltonian and can be combined with optimized splitting methods to yield efficient algorithms for rotating Bose-Einstein condensates.
The method is particularly efficient for potentials that can be regarded as perturbed rotating and trapped condensates, e.g., for small nonlinearities, since it retains the near-integrable structure of the problem.
For large nonlinearities, the method remains highly efficient if higher order $p>2$ is sought. Furthermore, we show how it can be adapted to the presence of dissipation terms.
Numerical examples illustrate the performance of the scheme.
\end{abstract}

\begin{keyword}
Gross-Pitaevskii equation\sep rotating Bose-Einstein condensate\sep splitting\sep non-autonomous potentials
\end{keyword}
\maketitle

\section{Introduction}
The centerpiece of this work is the construction of an efficient geometric integrator for the two-dimensional harmonically trapped rotational Schr\"odinger equation in atomic units ($\hbar=m=1$) subject to periodic boundary conditions
\newcommand{\pvec}{\mathbf{p}}
\newcommand{\tn}{t}
\begin{align}\label{eq:1}
		i\partial_t \psi(\xvec,t) &= H_A(t)\psi(\xvec,t) , \qquad \psi(\xvec,0)=\psi_0\in L^2([-\pi,\pi]^2),
\\
		\intertext{with the explicitly time-dependent Hamiltonian } \nonumber
		H_A(t)&=
\tfrac12 \pvec^T\pvec + \tfrac12\left(\omega_x(t)^2 x^2 + \omega_y(t)^2 y^2\right)
    + \Omega L_z,
\end{align}
where $\xvec=(x,y)^T$, $\pvec=(p_x,p_y)^T$, $L_z = xp_y - yp_x$ denotes the angular momentum operator and $p_{k}=-i\partial_{k},\;k=x,y$.
This includes the case of unbounded domains since the solution vanishes up to round-off at sufficiently large spatial intervals due to the harmonic trapping potential. 
For simplicity of the presentation, we have chosen a simple form of the Hamiltonian \eqref{eq:1}, but our methodology also applies to virtually all relevant polynomial Hamiltonians of degree $\leq$ 2 in any dimension with arbitrary time-dependencies and we will show how to extend the presented techniques for more general quadratic and linear time-dependencies which are used to model collisions of atoms and molecules \cite{recamier85eti,fernandez89mtp}. 
%
The generalization to three dimensions is straightforward and will be briefly addressed in section~2.

The efficient solution of \eqref{eq:1} is of paramount importance to the computation of the dynamics of rotating Bose-Einstein condensates as we will see below, and in contrast to previous efforts \cite{chin05foa,bao06dor,wang07ats,bao09agl,bao13asa,ming14aes,hofstaetter14cos}, time-dependent (trapping) potentials and non-linearities can be treated without tempering the algebraic structure of the problem. The presence of such time-dependencies impedes a simple transformation to a rotating system of coordinates which would eliminate the rotation term $L_z$ for autonomous $H_A$.

At any given time $t$ and for \textit{any order} $p>1$, we show that, for a sufficiently small time-step $h$, there exist cheaply computable coefficients $f_j(t,h),g_k(t,h), e_l(t,h) \in i\mathbb{R}$ obeying a small system of polynomial equations such that
\beq\label{eq:decomp}
	e^{f_0 x^2}
	e^{f_1 y^2 + g_1 p_x^2 - e_1 yp_x}
	e^{f_2 x^2 + g_2 p_y^2 + e_2 xp_y}
	e^{f_3 y^2 + g_3 p_x^2 - e_3 yp_x}
	= \varphi^{H_A}_{\tn,\tn+h} +\cO\left({h^{p+1}}\right),
\eeq
where $\varphi^{H_A}_{t,t+h}$ denotes the exact flow of \eqref{eq:1} from $t$ to $t+h$.
By virtue of this decomposition, named $\Phi_{t,t+h}^{[p]}$, the position and moment coordinates are decoupled and can be diagonalized using Fourier transforms. After discretization, only six (one-dimensional) changes from coordinate to momentum space and vice versa per time-step exponents are required. These changes are performed by Fast Fourier Transforms (FFT) and hence suggest the name \emph{Fourier-splitting}.
The approximation preserves \emph{unitarity} (and thus the $L_2$-norm) and \emph{gauge invariance} of the exact solution and hence, it can be considered a geometric integrator in the sense of Ref.~\cite{faou12gni}. Furthermore, one can associate a time-dependent Hamiltonian with the decomposition which is exactly solved at each step.

The method is particularly successful for perturbed problems of the form 
\[
	H= H_A(t) + \varepsilon B(t, \xvec, |\psi|), \qquad \varepsilon\ll1,
\]
with a small parameter $\varepsilon$, and some real-valued function $B$,
which includes the Gross-Pitaevskii equation for Bose-Einstein condensates as special case.
The (nonlinear) Hamiltonian $H$ with $B=g|\psi|^2+V$ describes the evolution of a rotating Bose-Einstein condensate (BEC) subject to a harmonic (parabolic) trapping potential plus some perturbation $\varepsilon V$.
After the first experimental realization of BECs 
\cite{bec-experiment_1,bec-experiment_2,bec-experiment_3} and the consequently awarded Nobel prize in 2001, continuous attention of numerical analysts \cite{chin05foa,bao06dor,wang07ats,bao09agl,bao13asa,hofstaetter14cos,ming14aes} 
has been drawn to the solution of the autonomous version of \eqref{eq:1}, which is obtained by dropping all time-dependencies in the Hamiltonian. 

The flow of the perturbation $B$ can be easily computed since $B$ is diagonal in coordinate space and leaves the modulus $|\psi|$ constant, see Lemma~\ref{lemma} for details.
Using \eqref{eq:decomp}, the exact flow can be approximated by Strang's method to
\beq\label{strang}
	\varphi^{\varepsilon\tilde{B}(\tn+h)}_{h/2} \circ \Phi_{\tn,\tn+h}^{[p]} \circ \varphi^{\varepsilon\tilde{B}(\tn)}_{h/2} = \varphi^{H}_{\tn,\tn+h} + \cO\left({\varepsilon h^3} + h^{p+1}\right),
\eeq
where the tildes, $\tilde{B}$, indicate frozen (nonlinear) operators, i.e., $\varphi_{h}^{\tilde{B}(s)}$ is the flow of $i\dot{u}(t)=B(s)u(t)$.
The term proportional to $h^{p+1}$ originates from the error in the approximation of the part $H_A$ by the $p$th order method $\Phi^{[p]}$ \eqref{eq:decomp}.
Observe that the outer exponentials of \eqref{eq:decomp} are diagonal in coordinate space and no further FFT is necessary to solve the full problem \eqref{strang}.
An alternative approach \cite{bao06dor,wang07ats} splits the system into simultaneously diagonalizable parts 
$T_x=\tfrac12 p_x^2 - \Omega yp_x$, $T_y=\tfrac12 p_y^2 + \Omega xp_y$, $W=\tfrac{1}{2}\left(\omega_x(t)^2x^2+\omega_y(t)^2y^2\right)+\varepsilon B(t)$ and then
\beq\label{strang:classic}
	\varphi^{\tilde{W}(\tn+h)}_{h/2} \circ \varphi^{T_x}_{h/2}\circ\varphi^{T_y}_{h}\circ\varphi^{T_x}_{h/2} \circ \varphi^{\tilde{W}(\tn)}_{h/2} = \varphi^{H}_{\tn,\tn+h}+ \cO\left({h^3}\right),
\eeq
which also requires six FFTs but the small factor $\varepsilon$ in the error is lost. 
If the time is frozen in $H_A$, Laguerre transforms \cite{bao09agl,bao13asa,ming14aes,hofstaetter14cos} or a decomposition similar to \eqref{eq:decomp} \cite{chin05foa} can be used to advance $H_A$ \emph{without} recovering the small factor and even lose the  property $[B,[B,[B,H_A]]]=0$ which simplifies the design of highly efficient splitting methods \cite{blanes10sac}.

Eventually, the method will be embedded in such a splitting framework that generalizes \eqref{strang} and by comparing with \eqref{strang:classic}, it becomes clear that the number of flows $\varphi$ that have to be treated individually is reduced to two which will enable us to use optimized splitting methods from the literature. In consequence, we will see in the numerical experiments that the new procedure is efficient even for 
\[
	H= H_A(t) + \varepsilon B(t, \xvec, |\psi|), \qquad \varepsilon\gg1.
	\]
The decomposition is built upon earlier works for rotating but autonomous BEC \cite{chin05foa} and explicitly time\--dependent one-dimensional harmonic oscillators \cite{bader11fmf}, where Fourier-splittings have been used for simpler Hamiltonians.

In the following section, we give a short introduction to some numerical concepts which will culminate in the derivation of our method.
As described, the method addresses the solution of the dominant part in the Hamiltonian, i.e., kinetic energy, trapping and rotation, $H_A$. 
Its form is closely related to a splitting method, in fact, if the coefficients $f,g,e$ were taken to be 
\[
	f_0=0, \ f_1 =\frac14 \omega_y(t)^2, \ g_1=\frac14, \ e_1=\frac12\Omega,\
	f_2=\frac12 \omega_x(t)^2, \ g_2=\frac12, \ e_2=\Omega, \ f_3=f_1, \ g_3=g_1,\ e_3=e_1,
\]
we would recover a second order Strang splitting. We show how to modify these scalar coefficients in order yield an any-order approximation using the same number of exponentials.
Once we have established how to solve this part of the Hamiltonian as a whole, we can use it as building block in a splitting method for nonlinear Hamiltonians or in the presence of (time-dependent) perturbations.

In such perturbative settings, the algorithm can demonstrate its efficiency as seen from \eqref{strang} and \eqref{strang:classic} because an additional factor $\eps$ can be gained in the error.  We will elaborate on splittings for such near-integrable systems subject to explicit time-dependencies since the time coordinate has to be treated in a particular way in order to preserve the smallness in the error.

It turns out, that the rotation Hamiltonian $H_A(t)$ cannot be frozen, as $e^{-ihH_A(t)}$, but has to be propagated accurately in time using the flow $\varphi_{t,t+h}^{H_A}$.
For this purpose, we introduce the Magnus expansion that will produce an approximation to the exact flow to any desired order using only manipulations in the algebra generated by the Hamiltonian. 

Since this algebra is finite dimensional, all commutators in the Magnus expansion can be expressed in a simple basis which will then be used to construct our decomposition.

The efficiency of the method is demonstrated by a series of numerical examples.

\section{Derivation of the new method}
For the construction of our algorithm \eqref{eq:decomp}, a variety of tools are employed which will be briefly discussed in this section.

Splitting methods are frequently recommended for the integration of (nonlinear) Schr\"odinger equations due to
 their fast computability and high accuracy \cite{bao05afo,perez03amc,thalhammer09jcp}. 
Furthermore, they preserve geometric features of the exact solution, such as norm-conservation (unitarity) and gauge-invariance. 
\subsection{Splitting methods}
For a Hamiltonian $H = A + B$, suppose that the flows for one time-step $h$ of the parts $A, B$ are available as $\varphi^A_h$ and $\varphi^B_h$, respectively, then, a $p$th-order $s$-stage approximation $\Psi_h^{[p,s]}$ of the full solution $\varphi^H$ can be computed as
\beq\label{eq.splitting}
	\Psi_h^{[p,s]} = \varphi^{B}_{b_1 h}\circ \varphi^{A}_{a_1 h}
						\circ\varphi^{B}_{b_2 h}\circ \varphi^{A}_{a_2 h}\cdots
						\circ\varphi^{B}_{b_s h}\circ \varphi^{A}_{a_s h}
						= \varphi^{H}_h+ \mathcal{O}\left(h^{p+1}\right),
\eeq
for a suitable choice of coefficients $a_j,b_j\in\mathbb{R}$.
The choice $a_1=1, a_2=0$ and $b_1=b_2=1/2$ with $s=2$ corresponds to Strang's second-order method \eqref{strang}.
Higher-order methods can be designed using the Baker-Campbell-Hausdorff (BCH) formula, $e^{hA}e^{hB}=e^{\bch(hA,hB)}$, whose first terms are given by
\beq\label{eq.bch}
	\bch(hA,hB) = h(A+B) + \tfrac{h^2}{2}[A,B] + \tfrac{h^3}{12}\left([A,[A,B]]-[B,[A,B]]\right) + \mathcal{O}(h^4),
\eeq
for operators $A, B$ in some Lie algebra. 
The formula allows to derive a modified vector field, $h\tilde{H}=h(A+B)+\mathcal{O}(h^{p+1})$, for any such $\Psi_h^{[p,s]}$, solely in terms of commutators, whose exact solution $\varphi^{\tilde{H}}_h$ coincides with the result of the method and the discrepancy between this modified vector field and the original problem is called backward error. Notice that $\tilde H$ is sometimes called modified Hamiltonian and since it describes the flow of the numerical method, its energy is conserved.

Typically, the split is done such that $A=\tfrac12 \pvec^T\pvec$ and $B=B(t, \xvec, |\psi|)$ because then,
\[
	\varphi^{A}_h(\psi_0) = \mathcal{F}e^{-ih (k_x^2 + k_y^2)} \mathcal{F}^{-1} \psi_0,
\]
where $\mathcal{F}$ is the Fourier transform w.r.t. all spatial variables,
\[
	\mathcal{F}[\psi](k_x,k_y) = \frac{1}{\sqrt{2\pi}} \int_{-\pi}^\pi\int_{-\pi}^\pi \psi(x,y) e^{i(k_x x + k_y y)} dx dy.
\]
After spatial discretization, the integral can be evaluated using Fast-Fourier transforms and it remains to compute the exponential of a diagonal matrix constituted by the wave-numbers $-ih(k_x^2+k_y^2)$.

The flow of part $B$ can be computed as detailed in the following well-known lemma, 
\begin{lemma}\label{lemma}
		Let $B(t,  \xvec, |\psi|)$ be a real-valued function, then 
	\[
	i\dot \psi(t) = B(t,  \xvec, |\psi(t)|)\psi(t), \qquad \psi(0)=\psi_0
	\]
	has the solution
	\[
		\psi(t) = \exp\left(-i\int_0^t B(s,  \xvec, |\psi_0|) ds\right) \psi_0.
	\]
\end{lemma}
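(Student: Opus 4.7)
The plan is straightforward because the operator on the right is diagonal in the spatial variable (multiplication by $B$) and $B$ is real, so no spatial derivatives of $\psi$ appear and the PDE reduces pointwise to a scalar ODE. The two ingredients I need are (i) conservation of $|\psi(t,x)|$, which removes the nonlinearity, and (ii) solving a linear ODE at each fixed $x$ with a time-dependent but now state-independent coefficient.

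First I would establish pointwise conservation of the modulus. Fix $x$ and differentiate:
\[
\frac{d}{dt}|\psi(t,x)|^2 = \dot\psi(t,x)\overline{\psi(t,x)} + \psi(t,x)\overline{\dot\psi(t,x)}.
\]
Substituting $\dot\psi = -iB\psi$ and using that $B(t,x,|\psi|) \in \mathbb{R}$ gives $\overline{\dot\psi} = iB\overline{\psi}$, hence the two terms cancel and $|\psi(t,x)| = |\psi_0(x)|$ for all $t$ in the existence interval.

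Once this is in hand, the nonlinear dependence of $B$ on $|\psi(t)|$ is frozen to $|\psi_0|$, and the problem reduces, pointwise in $x$, to the linear scalar ODE
\[
i\dot u(t) = b(t)\,u(t), \qquad u(0) = \psi_0(x),
\]
with $b(t) := B(t,x,|\psi_0(x)|) \in \mathbb{R}$. Since $b(t)$ and $b(s)$ commute as scalars, the standard fundamental-solution formula yields
\[
u(t) = \exp\!\left(-i\int_0^t b(s)\,ds\right) u(0),
\]
and reinstating $x$ gives the claimed expression for $\psi(t,x)$.

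Finally I would verify that this formula solves the original equation: direct differentiation produces $i\dot\psi = B(t,x,|\psi_0|)\psi$, and because the constructed $\psi$ also satisfies $|\psi(t,x)| = |\psi_0(x)|$ (the exponential has modulus one), we may rewrite $B(t,x,|\psi_0|) = B(t,x,|\psi(t)|)$, recovering the original nonlinear equation. The only subtle point, which I would note briefly rather than belabor, is the justification of pointwise-in-$x$ reduction: it is immediate here because $B$ acts as a multiplication operator and involves no derivatives of $\psi$, so no regularity issues are introduced by treating the equation one $x$ at a time; uniqueness for the linear scalar problem at each $x$ then gives uniqueness of the full solution.
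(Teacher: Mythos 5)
Your argument is correct and follows essentially the same route as the paper: show $\frac{d}{dt}|\psi|^2=0$ using the reality of $B$, which freezes the nonlinearity at $|\psi_0|$ and reduces the problem to a linear non-autonomous scalar ODE solved by the integrating-factor exponential. Your version merely spells out the pointwise-in-$x$ reduction and the final verification, which the paper leaves implicit.
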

The proof relies on the simple calculation that $\frac{d}{dt}|\psi|^2 = \dot{\psi^*}\psi + {\psi^*}\dot\psi = 0,$
after plugging in the definition of the derivative. Then, the system reduces to a linear non-autonomous ODE.\hfill $\qed$

Suppose now, that the Hamiltonian has explicit time-dependencies, $H=H_A(t) + \eps B(t, \xvec,|\psi|)$.
Introducing time as two new coordinates $t_1, t_2$ makes the system treatable with splitting methods.
We write the Lie-derivative corresponding to the (nonlinear) vector field $H\psi$ as
\beq\label{eq.lie}
	\mathcal{L}_H = H_A(t_1)\left(\psi\frac{d}{d\psi}+\psi^*\frac{d}{d\psi^*}\right) + \frac{d}{dt_1}+ \eps B(t_2, \xvec,|\psi|)\left(\frac{d}{d\psi}+\frac{d}{d\psi^*}\right) + \frac{d}{dt_2}.
\eeq
The derivatives $d/dt_j$ are responsible for the evolution of the time-coordinates $t_j$.
Readers that are not familiar with Lie derivatives can use an analogy with a system of ODEs that is augmented by new time-coordinates,
\[
	\frac{d}{dt}y(t) = H_A(t_1)y(t)+\eps B(t_2)y(t), \quad
	\frac{d}{dt}t_1(t)=1, \quad
	\frac{d}{dt}t_2(t)=1, \qquad (y(0),t_1(0),t_2(0))=(y_0,0,0).	
\]

There are several possibilities to split this enhanced operator (or system of equations): the computationally simplest pairs operators depending on $t_j$ with the evolution of $t_i$, $i\neq j$, e.g., 
$H_A(t_1)$ with $ d/dt_2$ and $\eps B(t_2)$ with $d/dt_1$. 
In this way, the splitting works exactly as for the autonomous situation since, in each internal step, the main operator $H_A(t_1)$ (or $B(t_2)$) is frozen and the other time-coordinate $t_2$ (or $t_1$) is advanced accordingly. 
Using the ODE analogy, this corresponds to a split into two systems
\[
	\frac{d}{dt}\begin{pmatrix} y(t)\\ t_1(t) \\ t_2(t)\end{pmatrix}=
	\begin{pmatrix} H_A(t_1)y(t)y(t)\\ 0 \\ 1\end{pmatrix},
	\quad
	\frac{d}{dt}\begin{pmatrix} y(t)\\ t_1(t) \\ t_2(t)\end{pmatrix}=
	\begin{pmatrix} \eps B(t_2)y(t)\\ 1 \\ 0\end{pmatrix}.
\]
A closer look at the error terms by computing the commutators reveals that this split, albeit simple, is by a factor $\eps$ less accurate since  the derivatives w.r.t $t_j$ mix large and small terms and we briefly examine how it can be recovered \cite{blanes10sac} after a short interlude on higher-order compositions in presence of a small parameter.

\paragraph{Near-integrable structure}
The appearance of the extra factor $\eps$ in the error terms is due to the separation of large and small parts in the splitting and extremely successful composition methods have been developed for this problem class \cite{mclachlan95cmi,blanes13nfo}.
The basic idea is to express the error in a power series in the time-step and in the small parameter,
\[
	\Phi_h-\varphi_h = \sum_{j\geq1}\sum_{k\geq s_j} e_{j,k}\eps^j h^{k+1} \quad \text{as}\  (h,\eps)\to(0,0),
\]
where the $s_j$ start from the first non-vanishing error coefficient and such a method $\Phi_h$ is said to be of \textit{generalized order} $(s_1,s_2,\ldots,s_m)$ (where $s_1\geq s_2\geq \cdots\geq s_m$).
Then, the coefficients $a_j,b_j$ of a splitting method \eqref{eq.splitting} can be chosen to construct methods of any generalized order, given that either of the two parts $A$ or $B$ is proportional to $\eps$.

This proportionality automatically carries over to the commutators $[A,B]\propto \eps$ but some additional consideration is required in the presence of time-dependencies since the operators causing the time-evolution are not necessarily proportional to $\eps$.
The simplest treatment makes use of the formulation in Lie derivatives \eqref{eq.lie}, for which we have already pointed out that freezing both parts will necessarily reduce the generalized order of a scheme.
On the upside, there is a remedy which has motivated this study: if the large part is advanced non-autonomously, the generalized error is preserved \cite{blanes10sac}.
In terms of Lie derivatives, this corresponds to taking the large part of \eqref{eq.lie} to be either
$A=H_A(t_1) + d/dt_1$ or $A=H_A(t_1) + d/dt_1+d/dt_2$. The latter option implies freezing the remainder $B$ and is usually preferred for simplicity and efficiency.
Our aim is to apply the highly efficient splitting methods for near-integrable systems \cite{mclachlan95cmi} to the problem at hand which is of similar structure. As discussed above, however, a proper application of splittings means 
to solve $\varphi^{H_A}_{t_j,t_j+a_jh}$ for a fractional time-step $a_jh$.

We stress that a propagation of $H_A$ in the autonomous (or frozen) setting using Laguerre-polynomials \cite{bao05afo,bao09agl,hofstaetter14cos} would be (at least) by a factor $\eps$ less accurate and the basis would have to be recomputed in each internal step. In consequence, the proposed algorithm which leaves the dominant part $\varphi^{H_A}_{t_j,t_j+a_jh}$ intact is the only way to preserve the generalized order of a given splitting method.


\subsection{Time averaging}
From the considerations above, it is clear that instead of simply propagating frozen operators, we need to find good approximations to the exact flow $\varphi^{H_A}_{t_j,t_j+a_jh}$.
A cornerstone of the construction is the formal solution of a non-autonomous (linear) initial value problem, $\partial_t u(t)=A(t)u(t)$, in the form $u(t+h) = \exp(\Theta(t,t+h))u(t)$.
The \emph{Magnus expansion} \cite{magnus54ote} gives an expression for the exponent $\Theta$ using integrals of commutators of increasing length of the operator $A$ evaluated at different instances of time.
Its first two terms are 
\[
	\Theta(t,t+h) = \int_t^{t+h} A(s)ds + \frac12\int_t^{t+h}\int_t^{s_1}[A(s_1),A(s_2)]ds_2\,ds_1+\cdots,
\]
and recursive procedures exist to obtain higher-order corrections \cite{blanes09tme}.
The integrals can be efficiently computed by quadrature rules \cite{iserles99ots,blanes09tme} which will be exemplified in the numerical section.

It is easy to verify that the components of $H_A$ generate, via commutation, a ten-dimensional Lie algebra $\mathfrak{g}$ with basis 
\[
	\{x^2, p_x^2, y^2, p_y^2, xp_y, yp_x, xy, p_xp_y, xp_x+p_xx, yp_y+p_yy\},
\]
and since the Magnus expansion only operates by summation and commutation, we have $\Theta, \Theta^{[p]}\in\mathfrak{g}$ at any truncation order $p$, where the truncation is performed within the algebra, s.t., $\Theta^{[p]}=\Theta + \mathcal{O}(h^{p+1})$.
We stress that this yields a geometric integrator as staying in the correct algebra $\mathfrak{g}$ assures unitarity of the exponential.

Evaluating the commutators, the (truncated) Magnus expansion can be interpreted as averaged Hamiltonian, $\tilde{H}_{h}^{[p]}$,
\beq\label{magnus}
	\Theta_{\tn,\tn+h}^{[p]}= -ih\tilde{H}_{h}^{[p]} =-ih\,\Bigg(\,\frac{1}{2} (m_x p_x^2 + m_y p_y^2) + \frac{w_x}{2} x^2 + \frac{w_y}{2} y^2
										+\Omega_x xp_y - \Omega_y yp_x + \alpha xp_x +  \beta yp_y + \gamma xy + \delta p_xp_y
										\Bigg)\in \mathfrak{g},
\eeq
for some coefficients $m_x, m_y,w_x,$ etc. that depend on $h$, $p$ and on the integrals of $\omega_x(s)^2, \omega_y(s)^2$ over the interval $[t,t+h]$.
Fixing the time-step $h$, it is clear that the flow $e^{-it\tilde{H}_{h}^{[p]}}$ of
\[
	i\partial_t \psi(\xvec, t) = \tilde{H}_{h}^{[p]}\psi(\xvec, t)
\]
coincides with the truncated Magnus expansion $\exp(\Theta^{[p]}_{\tn,\tn+h})$ at $t=h$.
In principle, one could think about using some standard splitting method for this operator, but the mixed terms $xp_x,yp_y$ cannot be diagonalized by means of Fourier transforms.
Notice the relationship between the decomposition \eqref{eq:decomp} and commutator-free Magnus methods \cite{blanes06fas} since the computationally difficult terms arise after commutation only. These methods, however, require a higher number of FFTs (double for order 4).

\subsection{The decomposition method}
After having obtained a $p$th-order approximation to the averaged Hamiltonian and thus $\Theta^{[p]}$ for a time-step $h$, we will now show how to accurately compute $e^{\Theta^{[p]}}$ without having to evaluate mixed operators.
The key to our endeavor is the finiteness of the underlying algebra to which any such $\Theta$ belongs.
\begin{theorem}
Let $\psi(x,y)$ be a  sufficiently smooth wave function\footnote{The smoothness is a generic requirement to ensure efficiency of Fourier methods for the approximation of the derivatives, e.g., the Laplacian.}, then, for sufficiently small $h>0$ and $\Theta^{[p]}_{t,t+h}$ from \eqref{magnus}, there exist scalars $e_j,f_k,g_l$ such that
\beq\label{eq:theorem}
	e^{f_0 x^2}
	e^{f_1 y^2 + g_1 p_x^2 - e_1 yp_x}
	e^{f_2 x^2 + g_2 p_y^2 + e_2 xp_y}
	e^{f_3 y^2 + g_3 p_x^2 - e_3 yp_x}
	\,\psi(x,y)
	= e^{\Theta^{[p]}_{t,t+h}}\,\psi(x,y).
\eeq
\end{theorem}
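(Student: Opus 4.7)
The strategy is algebraic: reduce \eqref{eq:theorem} to matching ten coordinates inside the finite-dimensional algebra $\mathfrak{g}$. The ten listed basis elements close under commutation (the algebra is isomorphic to $\mathfrak{sp}(4,\mathbb{R})$), each of the four exponents on the left-hand side of \eqref{eq:theorem} lies in $\mathfrak{g}$, and by construction so does $\Theta^{[p]}_{t,t+h}$. Acting on smooth $\psi$ via the metaplectic representation of the corresponding Lie group, \eqref{eq:theorem} becomes an equality of two group elements, each parametrised by ten real scalars.

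The first step is to apply the Baker--Campbell--Hausdorff formula iteratively to collapse the four factors into a single exponential $E_1E_2E_3E_4=e^{Z(\mathbf{p})}$, where $\mathbf{p}=(f_0,f_1,g_1,e_1,f_2,g_2,e_2,f_3,g_3,e_3)\in\mathbb{R}^{10}$. Because $\mathfrak{g}$ is finite dimensional, every nested commutator arising in BCH expands in the ten-element basis, and the series closes as a polynomial in $\mathbf{p}$ with values in $\mathfrak{g}$. Expanding $Z(\mathbf{p})$ and $\Theta^{[p]}_{t,t+h}$ in that basis and equating coefficients yields the promised small polynomial system of ten equations in ten unknowns.

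Solvability is the delicate part. A naive implicit function theorem attempt at $\mathbf{p}=0$ fails: the linearization produces only the six-dimensional span of $\{x^2,y^2,p_x^2,p_y^2,xp_y,yp_x\}$ and misses the commutator-generated directions $xy,\,p_xp_y,\,xp_x+p_xx,\,yp_y+p_yy$. Fortunately $\Theta^{[p]}$ inherits a matching hierarchy from the Magnus expansion: its projections onto the first six basis elements are $\cO(h)$ while those onto the remaining four are $\cO(h^2)$. Rescaling $\mathbf{p}=\cO(h)$, I would solve the system in two layers. The six linear equations determine $g_2$ and $e_2$ outright and fix the four pair-sums $f_0+f_2$, $f_1+f_3$, $g_1+g_3$, $e_1+e_3$, leaving four "split" parameters free. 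These splits are then pinned down by the four quadratic equations coming from the second-order BCH commutators, dominated by $[A_1,A_2]$, $[A_1,A_4]$, $[A_2,A_3]$ and $[A_2,A_4]$.

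The main obstacle is showing that this reduced $4\times 4$ quadratic system is non-degenerate in the rescaled variables. Concretely, this amounts to verifying that the explicit commutators that populate the four missing directions, namely $[x^2,p_x^2]\propto xp_x+p_xx$, $[x^2,yp_x]\propto xy$, $[p_x^2,xp_y]\propto p_xp_y$ and $[yp_x,xp_y]\propto (xp_x+p_xx)-(yp_y+p_yy)$, assemble at the leading-order values (dictated by the first layer) into a Jacobian of full rank. Once this non-degeneracy is established for generic data, a second application of the implicit function theorem in the rescaled coordinates delivers the required scalars $f_0,\dots,e_3$ for all sufficiently small $h$, completing the proof of \eqref{eq:theorem}.
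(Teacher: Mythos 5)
Your proposal follows essentially the same route as the paper's proof: collapse the four factors via BCH inside the ten-dimensional algebra $\mathfrak{g}$, equate coefficients against $\Theta^{[p]}_{t,t+h}$, and use the redundant pairs $(f_0,f_2)$, $(f_1,f_3)$, $(g_1,g_3)$, $(e_1,e_3)$ to generate the four missing directions $xy$, $p_xp_y$, $xp_x+p_xx$, $yp_y+p_yy$ through commutators. Your rescaled two-layer implicit-function argument is a more explicit account of the solvability step -- the paper merely asserts that one variable in each pair matches its basis element while the other creates the commutator terms, and defers the non-degeneracy issue to the numerical solution of the resulting polynomial system -- so the residual Jacobian check you flag is a gap shared with, not introduced beyond, the original proof.
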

\begin{proof}
Together with the BCH formula \eqref{eq.bch}, we deduce that for sufficiently small $h$, the left-hand side of \eqref{eq:theorem} is summable and can be expressed as a single exponential of a linear combination of the basis elements \cite{wei63las,wilcox67jmp}.
The scalar coefficients in \eqref{eq:theorem} are determined by equating the resulting exponent to the averaged Hamiltonian $-ih\tilde{H}_{h}^{[p]}$ in \eqref{magnus}. 
The missing operators $xp_x, yp_y$, can be generated by the following commutators,
\[
	[x^2, p_x^2],\ [y^2, p_y^2],\ [p_xp_y, xy], \ [xp_y, yp_x],
\]
only. 
Our ansatz hence includes two free variables multiplying 
$x^2$ ($f_0, f_2$), $y^2$ ($f_1, f_3$), $p_x^2$ ($g_1,g_3$) and $yp_x$ ($e_1, e_3$). 
One in each pair will satisfy the equation for the corresponding basis element, whereas the other can be used to create the terms $xy, p_xp_y, xp_x$ and $yp_y$.
\end{proof}
The BCH formula, however, is of very limited use for the actual computation of this equation since the number of appearing commutators grows exponentially with the order. 

Instead, we propose an alternative procedure to derive the coefficients $e_j,f_k,g_l$, which extends results from Ref.~\cite{bader11fmf} and relies on finding a faithful (injective) representation of the operator Lie algebra $\mathfrak{g}$.
This Lie algebra isomorphism drastically simplifies all calculations since we will be able to verify the decomposition \eqref{eq:decomp} by computations in a (low-dimensional) matrix setting.
The correspondence principle $i[\cdot,\,\cdot]\to \{\cdot,\,\cdot\}$ between the quantum Lie bracket and the Poisson bracket gives an elegant method to find the isomorphism by considering the equivalent classical Hamiltonian system for $\tilde{H}(x,y,p_x,p_y)$.
For convenience of the reader, we recall that the classical equations of motion are given by
\[
\dot{\mathbf{r}} = \frac{\partial H}{\partial \mathbf{p}}, \qquad 
\dot{\mathbf{p}} = -\frac{\partial H}{\partial \mathbf{r}},
\]
which translates for our Hamiltonian $\tilde{H}_h^{[p]}$ from \eqref{magnus} to 
%
\beq\label{eq.classical}
	\frac{d}{dt}\begin{pmatrix}
	x \\ y \\ p_x \\ p_y
	\end{pmatrix}
	=
	 M\begin{pmatrix}
	x \\ y \\ p_x \\ p_y
	\end{pmatrix}
	= 
	\begin{pmatrix}
		\alpha   & -\Omega_y & m_x 		 & \delta\\
		\Omega_x & \beta & \delta & m_y\\
		-w_x& -\gamma   &-\alpha& -\Omega_x\\
		-\gamma & -w_y &\Omega_y& - \beta
	\end{pmatrix}
	\begin{pmatrix}
	x \\ y \\ p_x \\ p_y
	\end{pmatrix} 
	.
\eeq
The injectivity is a consequence of the uniqueness of each matrix element w.r.t. the coefficients in $\tilde{H}$ and it is easy to verify that the matrices indeed form an isomorphic Lie algebra with the standard matrix commutator.

Although it is too cumbersome to evaluate the solution operator $\exp(hM)$ in closed form, it is a straightforward numerical task.

Next, we take a look at the left-hand side of \eqref{eq:decomp}, and for illustration, we compute the rightmost exponent,
$\exp({f_3 y^2 + g_3 p_x^2 - e_3 yp_x})$ in the matrix algebra which solves the equation
\[
	\frac{d}{dt}\begin{pmatrix}
x \\ y \\ p_x \\ p_y
\end{pmatrix}
= 
\begin{pmatrix}
	0 & -e_3 & 2g_3 & 0\\
	0 & 0 & 0 & 0\\
	0& 0 & 0 & 0\\
	0& -2f_3 & e_3 & 0
\end{pmatrix}
\begin{pmatrix}
x \\ y \\ p_x \\ p_y
\end{pmatrix} = N\begin{pmatrix}
x \\ y \\ p_x \\ p_y
\end{pmatrix}, 
\]
and hence $\exp{(f_2 x^2 + g_2 p_y^2 + e_2 xp_y)}=\exp(N)=1+N$.
The remaining matrices in the exponents of $\Phi_{\tn,\tn+h}^{[p]}$ are also nilpotent and can be trivially exponentiated,
\def\Id{1}
%
\begin{multline}\label{eq.fullmatrices}
e^{f_0 x^2/2}
e^{f_1 y^2/2 + g_1 p_x^2/2 - e_1 yp_x}
e^{f_2 x^2/2 + g_2 p_y^2/2 + e_2 xp_y}
e^{f_3 y^2/2 + g_3 p_x^2/2 - e_3 yp_x}=\\
\begin{pmatrix}
	1 & -e_3 & g_3 & 0\\
	0 & 1 & 0 & 0\\
	0& 0 & 1 & 0\\
	0& -f_3 & e_3 & 1
\end{pmatrix}
\begin{pmatrix}
	1   & 0 & 0 & 0\\
	e_2 & 1 & 0 & g_2\\
	-f_2& 0 & 1 & -e_2\\
	0   & 0 & 0 & 1
\end{pmatrix}
\begin{pmatrix}
	1 & -e_1 & g_1 & 0\\
	0 & 1 & 0 & 0\\
	0& 0 & 1 & 0\\
	0& -f_1 & e_1 & 1
\end{pmatrix}
\begin{pmatrix}
	1   & 0 & 0 & 0\\
	0 & 1 & 0 & 0\\
	-f_0& 0 & 1 & 0\\
	0   & 0 & 0 & 1
\end{pmatrix}.
\end{multline}
Notice that we have changed the exponents by introducing factors $1/2$ in the previous equation to get a more compact notation. Later on, we revert to the original form for better readability.
Furthermore, the multiplication order of the matrices had to be reversed w.r.t. the exponentials to account for their nature as Lie derivatives (cf. \textit{Vertauschungssatz} \cite{hairer06gni}) and equality holds if both sides are understood as flows.

Multiplication of the matrix exponentials \eqref{eq.fullmatrices} according to \eqref{eq:decomp} yields a $4\times4$ matrix of multivariate polynomials of maximum degree four that has to be equated to $\exp(hM)$.
The system $u'=Mu$, $u=(x,y,p_x,p_y)^T$ has ten degrees of freedom, originating from the linearly independent basis terms and the same number of variables has been introduced in \eqref{eq:decomp}.
It is clear that if we had allowed the appearance of the mixed terms $xp_x, yp_y$, the composition could be solved easily since we have a free variable for each basis element \cite{wei63las,wilcox67jmp}.

With the aim of obtaining a \emph{Fourier-diagonalizable} decomposition, i.e., terms that are diagonal after Fourier transform, we examine the structure coefficients of the algebra.
It turns out that $xp_x, yp_y$, can be generated by the following commutators,
\[
	[x^2, p_x^2],\ [y^2, p_y^2],\ [p_xp_y, xy], \ [xp_y, yp_x],
\]
only. 
Our ansatz hence includes two free variables multiplying 
$x^2$ ($f_0, f_2$), $y^2$ ($f_1, f_3$), $p_x^2$ ($g_1,g_3$) and $yp_x$ ($e_1, e_3$). 
One in each pair will satisfy the equation for the corresponding basis element, whereas the other can be used to create the terms $xy, p_xp_y, xp_x$ and $yp_y$.
We deduce that all basis terms can be generated and thus, for sufficiently small $h$, a solution of the only formally overdetermined $4\times4$ nonlinear algebraic system can be computed, e.g., with the Gauss-Newton algorithm.

With the help of a Gr\"obner basis and simple algebra, the number of variables can be reduced to accelerate the algorithm.

Notice that there are multiple choices of possible compositions.
For example, at the same cost, we could have replaced the outer exponential by $e^{f_0 xy}$ or introduced the terms $e^{kp_xp_y}$ before and after the center exponential (setting $f_0=0$).
It is not clear whether other choices for the decomposition are more advantageous.

For simplicity of the presentation, we have chosen a simple form of the Hamiltonian \eqref{eq:1}, but using our methodology, analogous methods can be derived for virtually all\footnote{Excluding certain pathological cases, e.g., absence of kinetic energy etc.} relevant polynomial Hamiltonians of degree $\leq2$ in any dimension with arbitrary time-dependencies.

A full time-step of the algorithm is summarized in Table~\ref{tab.algo}.
\begin{table}
\begin{tabular}{p{5mm}p{.9\textwidth}}
\toprule
\multicolumn{2}{l}{\textbf{Algorithm}}\\[2mm]
1 & Compute the Magnus expansion \eqref{magnus} of $H_A$ up to the desired order $p$.
			We refer to the review \cite{blanes09tme} for the expansion algorithm as well as for explicit formulae for order four and six methods \cite[pp. 205-206]{blanes09tme}.\\[1mm]
2 & Rewrite the resulting commutators in the basis of the Lie algebra to obtain the modified Hamiltonian \eqref{magnus}.\\[1mm]
3 & 
Solve the resulting (small) polynomial system $\exp(M)=E$, where $M$ is defined in \eqref{eq.classical} and $E$ is the right-hand-side of \eqref{eq.fullmatrices}
to obtain the coefficients $e_j, f_k, g_l$.\\[1mm]
4 & Apply the composition $\Phi_{t,t+h}^{[p]}$ \eqref{eq:decomp} using six one-dimensional FFTs $\mathcal{F}_x,\mathcal{F}_y$ to diagonalize the exponentials,\\[2mm]
&\multicolumn{1}{c}{
$\displaystyle 
\Phi_{t,t+h}^{[p]}(\psi(x,y))= 
	e^{f_0 x^2}
	\mathcal{F}_x^{-1}
	e^{f_1 y^2 + g_1 \hat p_x^2 - e_1 y\hat p_x}
	\mathcal{F}_x\mathcal{F}_y^{-1}
	e^{f_2 x^2 + g_2 \hat p_y^2 + e_2 x\hat p_y}
	\mathcal{F}_y\mathcal{F}_x^{-1}
	e^{f_3 y^2 + g_3 \hat p_x^2 - e_3 y \hat p_x}
	\mathcal{F}_x\ \psi(x,y)$}\\[2mm]
	&Note that the momentum operators $p_k$ have been replaced by $\hat p_k$ to indicate that they correspond to their (diagonal) representation in the momentum space.
	\\[2mm] 	
\bottomrule
\end{tabular}
\caption{\label{tab.algo} Algorithm for the computation of $\varphi_{t,t+h}^{H_A}$. Expressions for steps 1 and 2 can be precomputed and only need to be updated with the current values of the time coordinate. The steps can be followed at a concrete example in section~\ref{sec.num}.}
\end{table}
It is worth pointing out that the extra effort is virtually independent of the order choose for the Magnus expansion and can be neglected as the number of grid points increase. 

In total, one step of the algorithm requires the application of two 1D and two 2D Fourier transforms, which can be implemented at the cost of three 2D-FFTs, and prior to evolving the wave function, the coefficients are determined through exponentiating a $4\times 4$ matrix and solving a small nonlinear system.
The effort for the solution of the (formally) overdetermined system, which can be done by a least-square algorithm, is marginal since -- for small time-steps -- the solution is not far from $0\in\R^{10}$.

\subsubsection{Special cases}
In passing, we mention some further cases, for which the algebra simplifies. 
\paragraph{Isotropic trap}
			\[
				H = \frac{1}{2m(t)}\left(p_x^2 + p_y^2\right) + \frac{1}{2}m(t)\omega(t)^2 (x^2 + y^2) + \Omega(t) L_z
			\]
			Due to cancellations, the commutators of $H$ at different instances lie in the span of 
			\[
				\left\{p_x^2+p_y^2, x^2+y^2, L_z, (xp_x+p_xx)+(yp_y+p_yy)\right\}
				\]
				and any Magnus integrator can be written as effective Hamiltonian 
			\[
				\tilde{H}_{t,t+h} = a_{t,t+h}(p_x^2 + p_y^2) + b_{t,t+h} (x^2 + y^2) + c_{t,t+h} L_z
														+ d_{t,t+h} (xp_x + yp_y) + e_{t,t+h} xy.
			\]
			
\paragraph{Linear interaction} For time-dependencies proportional to the linear components only,
			\[
				H = \frac{1}{2}\left(p_x^2 + p_y^2\right) + \frac{\omega_0^2}{2}\left(x^2+y^2\right) + \xi_x(t)x + \xi_y(t)y + \Omega L_z,
			\]
			the following terms in algebra do not appear: $xy, p_xp_y, xp_x, yp_y$.
			It is therefore sufficient to employ a symmetric composition including the linear terms in the exponent,
			\[
	e^{f_1 y^2 + g_1 p_x^2 - e_1 yp_x + d_1 y + c_1 p_x }
	e^{f_2 x^2 + g_2 p_y^2 + e_2 xp_y + d_2 x + c_2 p_y }
	e^{f_1 y^2 + g_1 p_x^2 - e_1 yp_x + d_1 y + c_1 p_x }.
			\]
			The equations for the parameters have to be obtained in a slightly different way which is described below, preceding eqns. \eqref{eq.examplelin}.
		
\paragraph{General quadratics}
For more complicated Hamiltonians, 
\beq\label{eq:rot:fullE15}
		\tilde{H}_h^{[p]} = \sum_{j=1}^{15} \alpha_j(h) E_j,
\eeq
in the algebra $\mathfrak{g}$ with basis 
\begin{align}\nonumber
	E_1 &= x,			&		
	E_2 &= p_x,		& 
	E_3 &= \tfrac12 x^2, &  
	E_4 &= \tfrac12 p_x^2,  & 	
	E_5 &= \tfrac12\left(xp_x+p_xx\right),\\ \nonumber
	E_6 &= y,			&		
	E_7 &= p_y,		& 
	E_8 &= \tfrac12 y^2,&
  E_{9}&= \tfrac12 p_y^2,  &
	E_{10} &= \tfrac12\left(yp_y+p_yy\right),\\ \nonumber
	E_{11} &= xy, 		& 		
	E_{12} &= p_xp_y,	& 		
	E_{13} &= xp_y,		& 
	E_{14} &= yp_x,		& 
	E_{15} &= 1, 		& 	
\end{align}
in particular when linear terms are involved, the described procedure fails to a certain degree: As for the one dimensional harmonic oscillator problem \cite{bader11fmf}, the phase relation  cannot be recovered since the classical mechanical equivalent does not enter the equations of motion.
In principle, one could approximate the phase numerically, by introducing a new variable proportional to the phase $E_{15}$ and derive a system of differential equations for all parameters by interpreting them as time-dependent functions and plugging the ansatz into the Schr\"odinger equation with Hamiltonian \eqref{eq:rot:fullE15} after Magnus averaging.
Then, the resulting scalar functions are evaluated using the presented algorithm on a fixed grid which partitions the time-step interval $[t_n, t_n+h]$ and then we finally solve the differential equation for the free phase parameter numerically, or alternatively, we make use of the BCH formula.

This effort can be spared since only the global phase information is lost which is not observable.
The polynomial system to be solved then needs additional degrees of freedom to cater for the linear contributions and to close the discussion, we conjecture that there exist (under mild assumptions\footnote{In order to recover the mixed terms $xp_x, yp_y$, a pair of possible generators must be present in the Hamiltonian, e.g., $x,p_x^2$ can generate $xp_x$ through commutation.}) imaginary coefficients, such that
\beq\label{eq.decompfull}
\Psi_h=
	e^{n_1 x^2 + m_1 x}
	e^{f_1 y^2 + g_1 p_x^2 - e_1 yp_x + k_1 p_x}
	e^{f_2 x^2 + g_2 p_y^2 + e_2 xp_y + k_2 p_y}
	e^{f_3 y^2 + g_3 p_x^2 - e_3 yp_x + k_3 p_x}
	e^{n_2 x^2 + m_2 x},
\eeq
is the solution of the SE with Hamiltonian \eqref{eq:rot:fullE15} for small values of the $\alpha_j$.
In the remainder of this section, we compute the system of equations which will determine the scalar coefficients in the exponents.
Note that a slightly different methodology has to be applied to account adequately for the linear terms.
The corresponding classical mechanical system is 
\[
	\frac{d}{dt}\begin{pmatrix}
	x \\ y \\ p_x \\ p_y
	\end{pmatrix}
	=\begin{pmatrix}
	\nabla_{p_x}  \\ \nabla_{p_y} \\ -\nabla_{x} \\ -\nabla_{y} 
	\end{pmatrix}\tilde{H}_h^{[p]}
	=
	\begin{pmatrix}
		\alpha_2 + \alpha_4 p_x + \alpha_5 x + \alpha_{12} p_y + \alpha_{14}y\\
		\alpha_7 + \alpha_9 p_y + \alpha_{10} y + \alpha_{12} p_x + \alpha_{13}x\\
		-(\alpha_1 + \alpha_3 x + \alpha_5 p_x + \alpha_{11} y + \alpha_{13}p_y)\\
		-(\alpha_6 + \alpha_8 y + \alpha_{10} p_x + \alpha_{11} x + \alpha_{14}p_x)\\
	\end{pmatrix}
	=
	\underbrace{\begin{pmatrix}
		\alpha_5 & \alpha_{14} & \alpha_4 & \alpha_{12}\\
		\alpha_{13} & \alpha_{10} & \alpha_{12} & \alpha_{9}\\
		-\alpha_{3} & -\alpha_{11} & -\alpha_{5} & -\alpha_{13}\\
		-\alpha_{11} & -\alpha_{8} & -\alpha_{10} & -\alpha_{14}\\
		\end{pmatrix}}_{=M}
	\begin{pmatrix}
	x \\ y \\ p_x \\ p_y
	\end{pmatrix} +
	\begin{pmatrix}
	\alpha_2 \\ \alpha_7 \\ -\alpha_1 \\ -\alpha_6
	\end{pmatrix} 
	.
\]
The exact solution is obtained using the variation-of-constants formula which handles the linear contributions in the Hamiltonian,
\beq\label{eq.fullsol}
(x(t),y(t),p_x(t),p_y(t))^T = e^{tM}\ (x(0),y(0),p_x(0),p_y(0))^T\ + \ M^{-1}\left(e^{tM}-1\right)\ (\alpha_2 ,\alpha_7 ,-\alpha_1 ,-\alpha_6)^T
\eeq
Similarly, the decomposition cannot be written anymore as a product of matrix exponentials, instead, the exponentials are interpreted as flows and computed accordingly.
For clarity, we describe how to compute the flows in \eqref{eq.decompfull}. Take, e.g., the Hamiltonian $H=n_1x^2+m_1x$ with corresponding flow $e^{n_1x^2+m_1x}$.  Then, it is trivial to compute its action on some initial value $(x,y,p_x,p_y)^T$,
\beq\label{eq.examplelin}
\begin{split}
	e^{n_1x^2+m_1x}(x,y,p_x,p_y)^T &= (x,y,p_x - (2n_1x+m_1),p_y)^T,\\
	e^{f_1 y^2 + g_1 p_x^2 - e_1 yp_x + k_1 p_x}(x,y,p_x,p_y)^T&= (x + 2g_1 p_x - e_1y + k_1,
																y,p_x ,p_y-(2f_1 y-e_1p_x))^T,\\
	e^{f_2 x^2 + g_2 p_y^2 + e_2 xp_y + k_2 p_y}(x,y,p_x,p_y)^T &= (x,y+(2g_2p_y+e_2x+k_2),p_x -(2f_2 x+e_2 p_y),p_y)^T,\\
	e^{f_3 y^2 + g_3 p_x^2 - e_3 yp_x + k_3 p_x}(x,y,p_x,p_y)^T &= 
			(x + 2g_3 p_x - e_3y + k_3,y,p_x ,p_y-(2f_3 y-e_3p_x))^T,\\
	e^{n_2 x^2 + m_2 x}(x,y,p_x,p_y)^T &= (x,y,p_x - (2n_2x+m_2),p_y)^T.
\end{split}
\eeq
Composing in the order of \eqref{eq.decompfull}, which means from bottom to top, and writing the result as a affine system in the initial values, we equate with  \eqref{eq.fullsol} to get
\beq
e^{tM}\ (x,y,p_x,p_y)^T\ + \ M^{-1}\left(e^{tM}-1\right)\ (\alpha_2 ,\alpha_7 ,-\alpha_1 ,-\alpha_6)^T
= N (x,y,p_x,p_y)^T + (a,b,c,d)^T.
\eeq
Since this equality needs to hold for all initial values $(x,y,p_x,p_y)$, we can read off the equations separately from the homogeneous and inhomogeneous parts.

\subsection{Higher dimensions}
It is straightforward to generalize the results to arbitrary spatial dimensions $n$ given that the potentials remain quadratic. 
The only noteworthy detail is that the dimensions of the matrices that yield the polynomial system scale with $2n\times 2n$.
In the particular case of a harmonic trap in the $z$-axis -- ceteris paribus --
the Hamiltonian can be written as a sum 
\[
H = H_A + H_z + G(x,y,z),
\]
where $H_z = \frac{1}{2} p_z^2 + \frac{1}{2}\omega_z^2 z^2$.
A sensible splitting groups commuting terms $A=H_A+H_z$, leaving the (small) remainder $B=G$ in order to compute one splitting step
\[
	e^{-ih a_j A}e^{-ih b_jB} = e^{-ih a_j H_A}e^{-ih a_j H_z}e^{-ih b_j G},
\]
and $e^{-ihH_z}$ can be solved using two FFTs in the $z$-direction since
\[
	e^{-ihH_z}=e^{-i\tan(h\omega_z/2)z^2/2}e^{-i\sin(h\omega_z)p_z^2/2}e^{-i\tan(h\omega_z/2)z^2/2},
\]
 see \cite{bader11fmf,chin05foa}.

\section{Numerical results}\label{sec.num}
In this section, we will illustrate the performance of the algorithm in two settings.
First, we consider the plain decomposition method where we expect to recover the order of the underlying Magnus expansion. Second, we add a small nonlinearity $g$ and embedd the rotation Hamiltonian $H_A$ into a second order Strang splitting in order to illustrate the recovery of the generalized order (4,2):
Even though the method is of formal order 2, its error is proportional to size of the (nonlinear) perturbation. 
Embedding the decomposition in a higher order splitting method would maintain the full order while keeping the near-integrable structure.

To numerically verify the proposed algorithm, we choose the Hamiltonian
\beq\label{eq:rot:example}
	H_A(t) = \frac{1}{2} \left(p_x^2+ p_y^2\right) + \frac{1}{2} \left(\omega_x(t)^2 x^2 + \omega_y(t)^2y^2\right) + \Omega L_z,
\eeq
where $\omega_x(t)^2=\omega_0^2(1+\sin(t/2)), \omega_y(t)^2=(\omega_0^2-\sin(t/2))$ and $\omega_0^2=4, \ \Omega={1/10}$.
The spatial domain is discretized with $128\times 128$ grid points on $[-10, 10]^2$ and we integrate the normalized initial condition $\psi_0\propto(x+\ii y)e^{-(x^2+y^2)/2}$ until the final time $T=3$.
The first two steps are numbered as in Table~\ref{tab.algo}.\\
\emph{Step {1}:} For the time-averaging we choose a fourth-order Magnus integrator that is in turn based on the fourth-order Gauss-Legendre quadrature,
\beqs
 \Theta^{[4]}_{t,t+h}= -\ii\frac{h}{2}\left(H(t_1)+H(t_2)\right) + \frac{h^2}{4\sqrt{3}} [-\ii H(t_1), -\ii H(t_2)],
\eeqs
where $t_j=t+hc_j$ with the standard Gauss-nodes $c_{1,2}=(1 \mp 1/{\sqrt{3}})/2$.\\
\emph{Step {2}:} Evaluating the commutator leads to the averaged Hamiltonian
\beqs
\ii \Theta^{[4]}_{t,t+h}=
\!%
\begin{multlined}[t][.8\displaywidth]
\frac{h}{2} \left(p_x^2+ p_y^2\right) 
+ \frac{h}{2} \frac{\omega_x(t_1)^2+\omega_x(t_2)^2}{2} x^2 
+ \frac{h}{2} \frac{\omega_y(t_1)^2+\omega_y(t_2)^2}{2} y^2 
+ h \Omega L_z\\
+ \frac{h^2}{4\sqrt{3}} \left( \frac{xp_x+p_xx}{2} \left(\omega_x(t_2)^2-\omega_x(t_1)^2\right)
														+\frac{yp_y+p_yy}{2} \left(\omega_y(t_2)^2-\omega_y(t_1)^2\right) \right)\\
+ \frac{h^2}{4\sqrt{3}} \, \Omega \left(  \left(\omega_y(t_2)^2-\omega_y(t_1)^2\right) - \left(\omega_x(t_2)^2-\omega_x(t_1)^2\right) \right) xy .
\end{multlined}
\eeqs
\subsection{Rotating linear Hamiltonian}
In a first experiment, we compare the split \eqref{eq:decomp} against a standard symmetric approach which uses the same number of FFTs per step:
\beq\label{eq:rot:std}
\Psi^{[2]}_{t,t+h}= 
e^{-\ii \frac{h}{2} \omega_x^2(t) x^2}
e^{-\ii \frac{h}{2} \left(p_x^2/2 - \Omega yp_x\right)}
e^{-\ii h \left(p_y^2/2 + \Omega xp_y\right)}
e^{-\ii \frac{h}{2} \left(p_x^2/2 - \Omega yp_x\right)}
e^{-\ii \frac{h}{2} \omega_x^2(t+h) x^2}.
\eeq
The results are shown in Fig.~\ref{fig1} and clearly show the correct order and high accuracy of the new method.
\begin{figure}[!ht]%
\centering
\parbox{\textwidth}{\includegraphics[width=\textwidth]{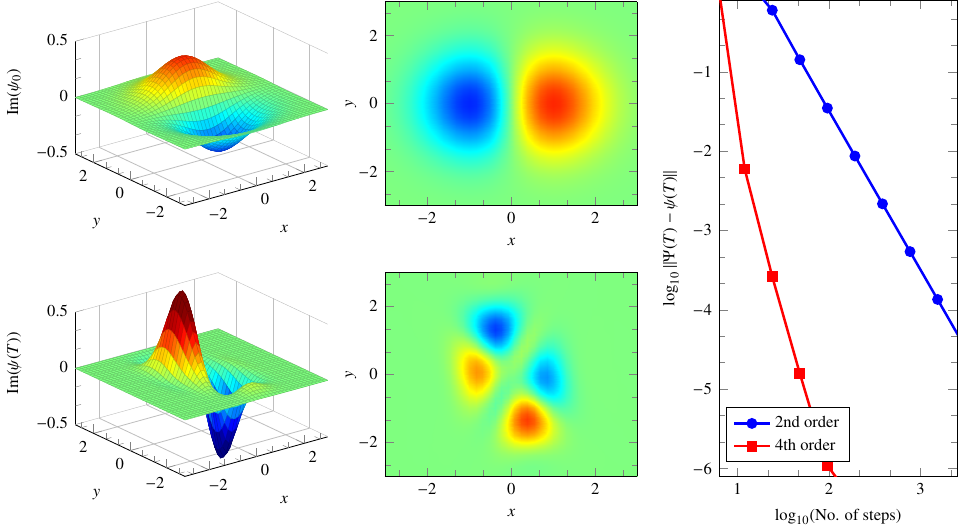}}
\caption{\label{fig1} (color online) The rightmost column shows the efficiency curves for the 2D rotating harmonic oscillator \eqref{eq:rot:example} integrated using $N_x=N_y=128$ equidistant grid points on $[-10,10]^2$.  
In the first row, the initial condition with imaginary (left) and real part (center) are displayed, whereas the evolution at time $T=3$ is depicted in the second row, both for imaginary (left) and real part (center). The real part is shown from above and the colormaps are kept constant in each panel, i.e., the same color corresponds to the same value. 
}
\end{figure}

\subsection{Rotating BEC with weak nonlinearity}
\label{sec.num.weaknonlin}
In a second experiment, the Hamiltonian $H_A$ is perturbed by a cubic nonlinearity,
\beq\label{eq:rot_pert_h}
		H(t) = H_A(t) + g |\psi|^2,
\eeq
with $g=1$ and the experimental setup is taken as above with $\omega_0^2=2,\ \Omega=1/5$ and $N_x=N_y=256$ grid points on the mesh $[-15,15]^2$. Instead of embedding our method within a higher-order splitting, we chose a simple Strang-type approach where the perturbation $H-H_A$ is appended on both sides of the method, 
\beq
\label{eq.nonlin2nd}
e^{-i\frac{h}{2}g|\psi|^2} \Psi	e^{-i\frac{h}{2}g|\psi|^2}.
\eeq
We denote by ROT(2) the second order method from our construction with $\Psi=\Phi^{[4]}_{t,t+h}$ as in \eqref{strang} and by STD(2) the symmetric approach $\Psi=\Psi^{[2]}_{t,t+h}$ from \eqref{strang:classic}, respectively.
Of course, despite the fourth-order Magnus expansion, we only expect a second-order integrator, however, with much smaller error terms when compared to \eqref{eq:rot:std} due to the smallness of the perturbation. The method, as well as the method of generalized order (4,2) from \cite{mclachlan95cmi} are expected to perform well for large step-sizes and small nonlinearities.
At higher order, the benefits of our method are even more pronounced: Taking into account that we are facing a multi-component splitting with explicit time-dependencies, it is not trivial to derive an efficient splitting algorithm.
The usual approach is to design a symmetric second order method $\Psi_h$, e.g., \eqref{strang:classic} and then compose it with itself using fractional time-steps as in
$$
	\Psi_{\gamma h}\circ\Psi_{(1-2\gamma)h}\circ \Psi_{\gamma h},
$$
where $\gamma=1/(2-2^{1/3})$. This procedure is the well-known as triple jump \cite{suzuki90fdo,yoshida90coh} and the resulting fourth-order method Y(4) will be used for reference in the experiment.
In our design, the flow of the linear (in the wave function) and explicit Hamiltonian are efficiently computed together and thus, the split only contains two components $A,B$. We can thus use standard fourth-order methods such as the optimized SRKN$_6^b$ (BM(4)) from \cite{blanes02psp} to illustrate the improved performance in comparison with the triple-jump.

The results in Fig.~\ref{fig2} confirm the predicted behavior.
At any given precision, roughly half of the steps are needed by the new algorithm using the same number of FFTs.
As the perturbation, in this case the nonlinearity, becomes weaker, the benefits increase - on the other hand, when the remainder cannot be considered to be a perturbation, the second-order curves will get closer and almost coincide.
\subsection{Rotating BEC with strong nonlinearity}
Using the setup from the previous experiment but with a strong interaction parameter $g=50$, the method maintains its advantages at higher precision as expected since it allows the use of optimized splitting methods in contrast to the simple Yoshida composition for higher order.
The results are shown in Fig.~\ref{fig3}. At lower precision, both split variants perform equally. However, when higher precision with higher order is sought, our methodology maintains the advantage since optimized two-component splittings, e.g., from \cite{blanes02psp}, can be used.

\begin{figure}[!ht]%
\centering
\parbox{\textwidth}{\includegraphics[width=\textwidth]{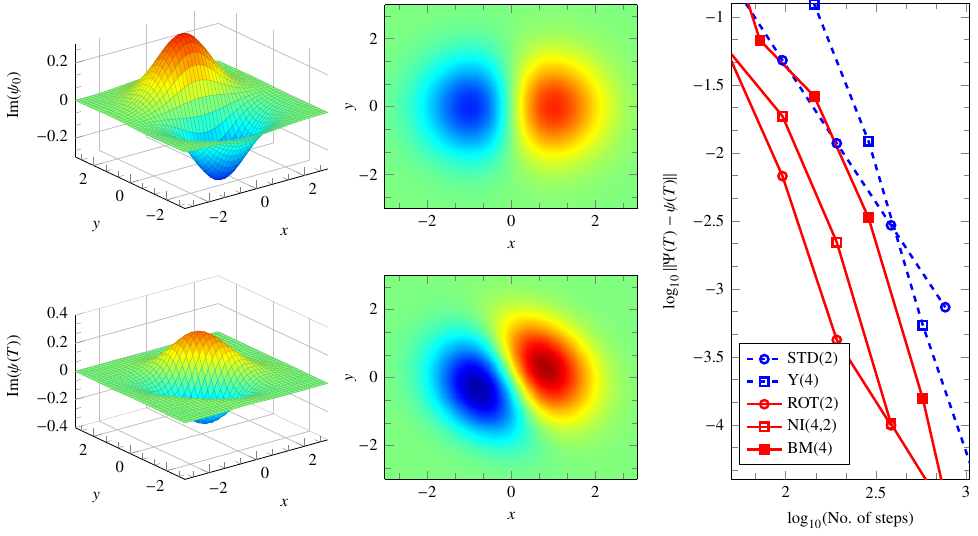}}
\caption{\label{fig2} (color online) For detailed captions, cf. Fig.~\ref{fig1}. Results for the Hamiltonian \eqref{eq:rot_pert_h} with a weak nonlinearity $g=1$. The top row shows the imaginary (left) and real part (center) of the initial condition, whereas the corresponding pictures for the exact solution at $T=5$ are displayed in the bottom row. The right panel demonstrates the smaller error constant for the proposed decomposition (red solid) in comparison with the standard split (blue dashed). The circles correspond to second order methods ROT(2) from \eqref{eq.nonlin2nd} and STD(2) \eqref{strang:classic}.}
\end{figure}

\begin{figure}[!ht]%
\centering
\parbox{\textwidth}{\includegraphics[width=\textwidth]{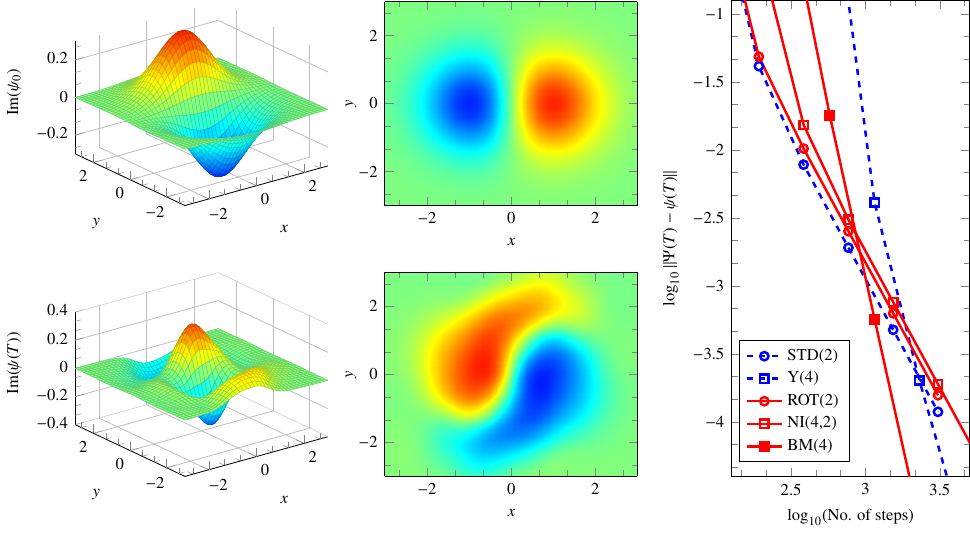}}
\caption{\label{fig3} (color online) For detailed captions, cf. Fig.~\ref{fig2}. Results for the nonlinear Hamiltonian \eqref{eq:rot_pert_h} with strong nonlinearity $g=50$ solved by \eqref{eq.nonlin2nd}. The top row shows the imaginary (left) and real part (center) of the initial condition, whereas the corresponding pictures for the exact solution at $T=5$ are displayed in the bottom row. The right panel demonstrates the smaller error constant for the proposed decomposition (red solid) in comparison with the standard split (blue dashed). The circles correspond to second order methods ROT(2) from \eqref{strang} and STD(2) \eqref{strang:classic}.}
\end{figure}

\subsection{Rotating BEC in the presence of dissipation}
Motivated by \cite{bao06dor}, we show how to adapt our method for a dissipative setup modeled by
\beq\label{eq.diss}
 (i-\lambda)\partial_t \psi = H_A(t)\psi + (g|\psi|^2  + V(t)) \psi.
\eeq
The dissipation, or loss in norm, is driven by the parameter $\lambda>0$.
Our methodology is straightforward to adapt to this setting. With the observation that the formal solution operator can be obtained by simply replacing $h$ by $ih/(i-\lambda)$ in the Magnus averaged Hamiltonian.
We conclude that the coefficients can then be computed just as before, with the difference that the polynomial system and its solutions have now become complex valued.

Due to the dissipation, Lemma~\ref{lemma} is no longer valid and the nonlinearity has to be solved differently. 
First, we stress that time is propagated together with $H_A$, containing the Laplacian and thereby recovering the Runge-Kutta-Nystr\"om (RKN) structure of the algebra\footnote{A Lie algebra generated by $A,B$ is said to be of type RKN if $[B,[B,[B,A]]]=0$.}. This has the additional benefit that more efficient splitting methods can be considered \cite{bader13itp}. Hence, as mentioned, the potential is frozen in the remaining part
\[
	(i-\lambda)\partial_t \psi = (g|\psi|^2  + V(x,t_{frozen})) \psi, \qquad \psi(x,0) = \psi_0.
\]
Using a standard trick from the Ginzburg-Landau equation, it can be solved by noting that
\[
\frac{d}{dt} |\psi|^2 = \dot{\psi}^*\psi+\psi^*\dot\psi=-\frac{2\lambda}{1+\lambda^2} \left(g|\psi|^2 + V(x,t_{frozen})\right) |\psi|^2,
\]
with solution
\[
|\psi(x,t)|^2 = \frac{|\psi_0|^2 V(x,t_{frozen})}{-g|\psi_0|^2 + \exp\left({\frac{2\lambda t V}{1+\lambda^2}}\right)\left(g|\psi_0|^2+V\right)}\ .
\]
The full remainder is then propagated as
\beq\label{eq.ginzburg}
	\psi(x,t) = \exp\left(\frac{1}{i-\lambda} \left(tV + g\int_0^t |\psi(x,s)|^2\, ds\right)
	\right)\psi_0
	= 
	\exp\left(-\frac{i+\lambda}{2\lambda} 
	\log\left[\frac{2\lambda t}{1+\lambda^2}\phi\left(\frac{2\lambda}{1+\lambda^2} t V\right) g|\psi_0|^2 + \exp\left({\frac{2\lambda}{1+\lambda^2} t V}\right)
	\right]
	\right)\psi_0,
\eeq
where $\phi(z) = (\exp(z)-1)/z$. This formulation avoids numerical singularities around $V\approx 0$ and $\psi_0\approx0$ and \eqref{eq.ginzburg} can be conveniently computed.
This equation is related to the imaginary time propagation technique to compute eigenstates of the Schr\"odinger equation \cite{chin05foa,bader13itp,bader14spi}.
Higher order splittings (including the triple jump) necessarily require negative time-steps and the consequent stability problems prohibit their use in this application.
However, complex time-steps could be -- in principle -- used to overcome this limitation.\
It has been shown in Ref.~\cite{bader14gis} how to use complex coefficients for the GPE including this variant. The findings suggest that the necessary doubling of the computational cost by introducing new variables in addition to the evaluation of at least three exponentials $\varphi^{H_A}_{a_j h}$ are less efficient than the simple use of Richardson extrapolation methods on the highly efficient new second order method with real splitting coefficients.
In Fig.~\ref{fig4_diss}, we show the performance of our method in comparison with the reference second order method STD(2) where the nonlinearity has also been propagated according to \eqref{eq.ginzburg}. The used an equidistant grid of $N_x=N_y=128$ points on $[-10,10]^2$ and integrated until $T=3$. The parameters are as in section~\ref{sec.num.weaknonlin} with $\lambda=0.02$.

\begin{figure}[!ht]%
\centering
\parbox{\textwidth}{{\includegraphics[width=\textwidth]{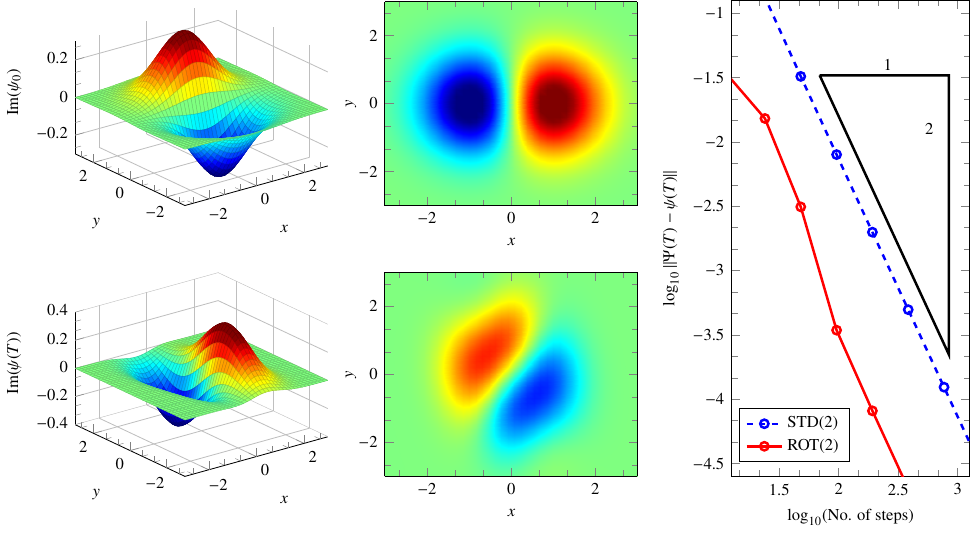}}}
\caption{\label{fig4_diss} (color online) Results for the nonlinear Hamiltonian in the presence of dissipation \eqref{eq.diss}. The top row shows the imaginary (left) and real part (center) of the initial condition, whereas the corresponding pictures for the exact solution at $T=5$ are displayed in the bottom row. The right panel demonstrates the smaller error constant for the proposed decomposition (red solid) in comparison with the standard split (blue dashed). The triangle indicates that the methods are of second order as expected.
}
\end{figure}

\section{Conclusions}
We have designed an efficient algorithm to integrate time\--de\-pen\-dent rotating BEC using only Fourier transforms and an iterative solver for a small algebraic system. 
The method solves any quadratic Hamiltonian, such as for example rotating condensates subject to time-dependent harmonic trappings, up to the desired precision and is thus particularly useful if the full Hamiltonian can be regarded as a perturbation thereof.
The method outperforms literature splitting methods for small nonlinearities at any precision and for large nonlinearieties at higher order and precision. Its adaption to dissipative Gross-Pitaevskii equation is shown to be straightforward.
The algorithm is related to splitting methods and Magnus expansions and as such preserves unitarity and gauge invariance of the exact solution operator and a corresponding modified Hamiltonian can be easily identified.
The proof technique is based on the finiteness of the Lie algebra generated by the Hamiltonian on which Magnus averaging has been performed. 
The quantum mechanical algebra has been identified with its classical mechanical counterpart from which a matrix representation has been derived.
The generalization of this technique to other quantum mechanical systems is subject of future work.

\bibliographystyle{plainnat} 

\bibliography{bibliography}
\end{document}